\title{Sharp conditions to avoid collisions in singular Cucker-Smale interactions}
\author{
Jos\'e A. Carrillo\footnote{{\it Email address:} {\tt carrillo@imperial.ac.uk}}\\
{\it\footnotesize Department of Mathematics,}\\
{\it\footnotesize Imperial College London,}\\
{\it\footnotesize SW7 2AZ, London, United Kingdom}
\\
Young-Pil Choi\footnote{{\it Email address:} {\tt ychoi@ma.tum.de}}\\
{\it\footnotesize Fakult\"at f\"ur Mathematik,}\\
{\it\footnotesize Technische Universit\"at M\"unchen,}\\
{\it\footnotesize Boltzmannstra\ss e 3, 85748, Garching bei M\"unchen, Germany}
\\
Piotr B. Mucha\footnote{{\it Email address:} {\tt p.mucha@mimuw.edu.pl}}\\
{\it\footnotesize Institute of Applied Mathematics and Mechanics,}\\
{\it\footnotesize University of Warsaw,}\\
{\it\footnotesize ul. Banacha 2, 02-097 Warsaw, Poland}
\\
Jan Peszek\footnote{{\it Email address:} {\tt j.peszek@mimuw.edu.pl}}\\
{\it\footnotesize Institute of Applied Mathematics and Mechanics,}\\
{\it\footnotesize University of Warsaw,}\\
{\it\footnotesize ul. Banacha 2, 02-097 Warsaw, Poland}
}
\date{\today}
\renewcommand{\it}{\sl}
\newcommand{\barint}{
         \rule[.036in]{.12in}{.009in}\kern-.16in
          \displaystyle\int  }
\begin{document}

\newtheorem{theo}{\bf Theorem}[section]
\newtheorem{coro}{\bf Corollary}[section]
\newtheorem{lem}{\bf Lemma}[section]
\newtheorem{rem}{\bf Remark}[section]
\newtheorem{defi}{\bf Definition}[section]
\newtheorem{ex}{\bf Example}[section]
\newtheorem{fact}{\bf Fact}[section]
\newtheorem{prop}{\bf Proposition}[section]
\newtheorem{prob}{\bf Problem}[section]

\makeatletter \@addtoreset{equation}{section}
\renewcommand{\theequation}{\thesection.\arabic{equation}}
\makeatother

\newcommand{\ds}{\displaystyle}
\newcommand{\ts}{\textstyle}
\newcommand{\ol}{\overline}
\newcommand{\wt}{\widetilde}
\newcommand{\ck}{{\cal K}}
\newcommand{\ve}{\varepsilon}
\newcommand{\vp}{\varphi}
\newcommand{\pa}{\partial}
\newcommand{\rp}{\mathbb{R}_+}
\newcommand{\hh}{\tilde{h}}
\newcommand{\HH}{\tilde{H}}
\newcommand{\cp}{{\rm cap}^+_M}
\newcommand{\hes}{\nabla^{(2)}}
\newcommand{\nn}{{\cal N}}
\newcommand{\dix}{\nabla_x\cdot}
\newcommand{\dv}{{\rm div}_v}
\newcommand{\di}{{\rm div}}
\newcommand{\pxi}{\partial_{x_i}}
\newcommand{\pmi}{\partial_{m_i}}
\newcommand{\tor}{\mathbb{T}}
\newcommand{\pot}{\mathcal{v}}

\newcommand{\R}{\mathbb{R}}
\newcommand{\lt}{\left}
\newcommand{\rt}{\right}
\newcommand{\om}{\Omega}
\newcommand{\mt}{\mathcal{T}}
\newcommand{\mc}{\mathcal{C}}
\newcommand{\ml}{\mathcal{L}}
\newcommand{\bq}{\begin{equation}}
\newcommand{\eq}{\end{equation}}


\maketitle

\begin{abstract}
We consider the Cucker-Smale flocking model with a singular communication weight $\psi(s) = s^{-\alpha}$ with $\alpha > 0$. We provide a critical value of the exponent $\alpha$ in the communication weight leading to global regularity of solutions or finite-time collision between particles. For $\alpha \geq 1$, we show that there is no collision between particles in finite time if they are placed in different positions initially. For $\alpha\geq 2$ we investigate a version of the Cucker-Smale model with expanded singularity i.e. with weight $\psi_\delta(s) = (s-\delta)^{-\alpha}$, $\delta\geq 0$. For such model we provide a uniform with respect to the number of particles estimate that controls the $\delta$-distance between particles. In case of $\delta = 0$ it reduces to the estimate of non-collisioness. 
\end{abstract}

%
%
%
%

\section{Introduction}
Mathematical description of dynamics of aggregating, swarming or flocking particles plays a significant role in modelling of various physical, biological and sociological phenomena.
 From the mathematical point of view such models take the form of transport-type PDE's or systems of ODE's, where the individuals are subjected to a force generated by their nonlocal interactions. Depending on the nature of the interactions, the individuals' behavior may differ for instance they can aggregate, align their velocities or disperse. There is a wide range of applications of such models that include such seemingly unrelated phenomena like distribution of goods, reaching a consensus among individuals or emergence of common languages in primitive cultures (see \cite{lang, cons, cons2, goods}). Among such models, our main subject of interest is the Cucker--Smale (in short, CS) flocking model introduced in \cite{cuc1} with the purpose of describing motion of self-propelled agents with tendency to flock. The CS dynamical system reads as follows:
\begin{align}
\begin{aligned}\label{cspart}
\left\{
\begin{array}{lcr}
\displaystyle \dot{x}_i=v_i, \quad i=1,\cdots, N,  \quad t > 0,\\[2mm]
\displaystyle \dot{v}_i=\frac{1}{N}\sum_{j=1}^N\psi(|x_i-x_j|)(v_j-v_i),
\end{array}
\right.
\end{aligned}
\end{align}
subject to the initial data
\bq\label{ini_cspart}
(x_i,v_i)(0) =: (x_{i0}, v_{i0}), \quad i =1,\cdots, N
\eq

Here $N$ is the number of particles, while $x_i(t)$ and $v_i(t)$ denote the position and velocity of $i$-th particle at the time $t$, respectively. The function $\psi$ is referred to as 
{\it the communication weight} and it is nonnegative and nonincreasing. The state of the art for CS model (and more generally, for kinetic models of interacting particles) is rich; it includes 
results in various directions, such as time asymptotics (see e.g. \cite{hakalaru, car}), pattern formation (see e.g. \cite{hajekaka, top}), models with additional deterministic (see e.g. \cite{car3, hahaki})
 or stochastic (see e.g. \cite{cuc4, aha2, choi, dua1, halele}) forces, aggregation with leaders (see e.g. \cite{cuc3, shen}), analysis of CS model with singular weight $\psi$ (see e.g. \cite{haliu, ahn1, jpe, jps}),
 passage from particle to the kinetic description for CS and similar models (see e.g. \cite{can, CH16, haliu, hatad, rec, deg1, deg3, mp}) and, particularly interesting from the point of view of this paper, collision avoidance 
(see e.g. \cite{cuc2, park}). We refer to \cite{CCP, CHL} for recent surveys.

In \cite{haliu}, the CS model with a singular communication weight of the form 
\begin{align}\label{psi}
\psi(s)=s^{-\alpha}\ \ \ {\rm for}\ \ \ \alpha>0
\end{align}
is considered and the time asymptotics behavior of solutions for the particle system \eqref{cspart} are provided. The question of existence, uniqueness and regularity of solutions was being consecutively answered after that work. In \cite{ahn1}, the particles' tendency to avoid collision is proved for $\alpha\geq 1$. To be more precise, the authors provide a set of initial configurations leading to no finite-time collision between particles and, as a consequence, the existence, uniqueness, and regularity of solutions to \eqref{cspart} are established for such initial data. On the other hand, existence, regularity, uniqueness of solutions to \eqref{cspart} in case of $\alpha\in(0,1)$ are investigated in \cite{jpe} and \cite{jps}. These results established a dichotomy: for $\alpha\in(0,1)$ the particles can collide and stick together and existence of solutions is obtained thanks to the low singularity of $\psi$ (particularly due to integrability of $\psi$ at $0$); on the other hand for $\alpha\in[1,\infty)$, the particles exhibit a tendency to avoid collisions.

The issue of collision-avoiding plays a significant role in quantitative analysis and applications of the model. Since system \eqref{cspart} with weight \eqref{psi} is singular only at times at which particles collide, knowledge that the particles do not collide enables us to immediately deduce existence, uniqueness and regularity of solutions. From the point of view of applications collision-avoiding is important in any situation when the described phenomena involves flocking of agents that naturally avoid collisions such as birds, fish or robots. In fact many models of flocking, such as in \cite{can}, that include effects of aggregation and alignment also include a repulsion effect to ensure the lack of collisions between particles.

The goal of this paper is to refine the results of \cite{ahn1} proving that for $\alpha\in[1,\infty)$ the CS particles indeed cannot collide (regardless of the initial configuration, as long as $x_{i0}\neq x_{j0}$ for $i\neq j$) and thus the solutions to \eqref{cspart} globally exist in time and are smooth and unique.

Further, in case of $\alpha\geq 2$ we improve our results by introducing a uniform with respect to the number of particles non-collisioness estimate. We prove the estimate for the following generalization of the CS particle system. Given $\delta \geq 0$ we introduce
\begin{align}
\begin{aligned}\label{dcspart}
\left\{
\begin{array}{lcr}
\displaystyle \dot{x}_i=v_i, \quad i=1,\cdots, N,  \quad t > 0,\\[2mm]
\displaystyle \dot{v}_i=\frac{1}{N}\sum_{j=1}^N\psi(|x_i-x_j|-\delta)(v_j-v_i),
\end{array}
\right.
\end{aligned}
\end{align}
subject to the initial data \eqref{ini_cspart} with $|x_{i0}-x_{j0}|>\delta$ for $i\neq j$.
In case of $\alpha>2$, the uniformly estimated quantity that determines $\delta$-distance between particles is given by
\begin{equation}\label{es}
\frac{1}{N^2}\sum_{i,j=1}^N(|x_i-x_j|-\delta)^{2-\alpha}\leq C \quad \mbox{for}\quad \alpha>2.
\end{equation}
Note that if $\delta = 0$ then \eqref{dcspart} reduces to \eqref{cspart} and \eqref{es} serves as a uniform estimate of non-collisioness. Similarly for $\delta>0$ it implies that 
the distances between particles are always greater than $\delta$ (provided that $|x_{i0}-x_{j0}|>\delta$ for $i\neq j$). This result can be viewed in the following way. We expand the set of singular points of $\psi$ from $\{0\}$ to $[0,\delta]$ and prove that the property that the solutions do not enter the singular set (in the sense that their distances do not enter this set) is preserved. It is also interesting from the point of view of applications, since it shows that the alignment kernel alone can be used to establish a minimal distance between particles (for which usually a separate repulsion kernel is utilized).

Another motivation of \eqref{es} comes from the fact that regularity of solutions to \eqref{cspart} can be controlled by the distances between particles. Thus estimates similar to \eqref{es} can be used in the passage from the particle system to the kinetic CS equation by mean-field limit as done for example in \cite{mp}. 

The paper is organized as follows. In Section \ref{noncoll} we present the proof of non-collisioness of solutions to \eqref{cspart} with $\alpha\geq 1$ from which we deduce the global existence, uniqueness and regularity of solutions to the particle system \eqref{cspart}. In Section \ref{estim}, we show the $N$-independent non-collisioness estimate for $\alpha\geq 2$.

%
%
%
%
\section{A global existence theory for the particle system}\label{noncoll}
In this section, we analyze the dynamics of the CS model with a singular weight given by \eqref{psi}. Our goal is to prove that for $\alpha\geq 1$ the particle system 
\eqref{cspart}-\eqref{ini_cspart} does not allow any collisions between particles and as a consequence admits a unique smooth solution globally in time. For this, we 
follow the ideas of \cite{haliu} establishing a system of locally dissipative differential inequalities (SDDI) for the quantities that control collisions between the particles.

%
%
%
%

From now on, $x=(x_1,\cdots,x_N)$, where $x_i=(x_{i,1},\cdots,x_{i,d})$ with $d \geq 1$ denotes the position of the particles, while $v = \dot{x}$ is their velocity. 
Throughout the paper $C$ denotes a generic positive constant that may change from line to line even in the same inequality. 

Let us be more precise about the meaning of collision between particles.

\begin{defi}
We say that the $i$-th and $j$-th particles collide at $t_0$ if and only if $x_i(t_0)=x_j(t_0)$ and we say that they stick together if they collide and $v_i(t_0)=v_j(t_0)$.
\end{defi}

The following proposition contains useful properties of solutions to the CS model showing the uniform bound estimate of the velocity in time. We refer to \cite{carchoha, jpe} for its proof.
\begin{prop}\label{bounds}
Let $(x,v)$ be a solution to (\ref{cspart}) on an arbitrary finite interval $[0,T]$, with singular communication weight given by (\ref{psi}). Then we have
\bq\label{vM}
\sup_{1 \leq i \leq N}|v_i(t)|\leq \sup_{1 \leq i \leq N}|v_{i0}| \quad \mbox{and} \quad \sup_{1 \leq i \leq N}|x_i(t)|\leq \sup_{1 \leq i \leq N}|x_{i0}| + 
\sup_{1 \leq i \leq N}|v_{i0}|\,t \nonumber \quad \mbox{for } t \geq 0.
\eq
\end{prop}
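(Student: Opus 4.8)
The plan is to prove the two bounds separately, deriving the velocity estimate first and then obtaining the position estimate by a direct integration in time. The key structural observation is that, rewriting the right-hand side of \eqref{cspart} as
\bq
\dot v_i = \frac1N\sum_{j=1}^N \psi(|x_i-x_j|)(v_j-v_i),
\eq
each velocity $v_i$ is driven toward a nonnegatively weighted average of the remaining velocities. This suggests that the convex hull of the velocity vectors is forward-invariant, which immediately yields the first inequality since that hull is contained in the ball of radius $\sup_j|v_{j0}|$.

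To make this precise I would work with support functions. For a fixed unit vector $e\in\mathbb{R}^d$ set $h_e(t):=\max_{1\le i\le N} v_i(t)\cdot e$, a Lipschitz function of $t$. At any time let $i^\ast$ be an index achieving the maximum; then
\bq
\frac{d}{dt}\big(v_{i^\ast}\cdot e\big) = \frac1N\sum_{j=1}^N \psi(|x_{i^\ast}-x_j|)\,\big(v_j\cdot e - v_{i^\ast}\cdot e\big)\le 0,
\eq
because every weight $\psi(|x_{i^\ast}-x_j|)$ is nonnegative while each bracket is nonpositive by maximality of $i^\ast$. Using the standard fact that the upper Dini derivative of $h_e$ at $t$ is bounded by $\frac{d}{dt}(v_{i^\ast}\cdot e)$ for a maximizing index, we conclude that $h_e$ is nonincreasing. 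Letting $e$ range over all unit vectors shows that $v_i(t)$ stays in the convex hull of $\{v_{j0}\}_{j=1}^N$, and therefore $\sup_i|v_i(t)|\le \sup_i|v_{i0}|$.

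The position bound then follows by integrating $\dot x_i=v_i$: since $|v_i(s)|\le \sup_j|v_{j0}|=:V_0$ on $[0,T]$, we get $|x_i(t)|\le |x_{i0}| + \int_0^t |v_i(s)|\,ds \le \sup_j|x_{j0}| + V_0\,t$. The only delicate point is the nondifferentiability of the maximum defining $h_e$; this is handled cleanly by the Dini-derivative comparison above, which is valid because, on the interval $[0,T]$ where a solution is assumed to exist, the particles cannot collide and hence every communication weight $\psi(|x_i-x_j|)$ is finite. I expect this maximum-principle step to be the only real subtlety, the rest being routine.
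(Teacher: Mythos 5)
The paper never proves Proposition \ref{bounds} itself; it defers to the references \cite{carchoha, jpe}. Your argument is, in substance, exactly the standard proof found there: the support-function maximum principle showing that the convex hull of the velocities is forward-invariant (hence nonexpanding), followed by a trivial integration of $\dot x_i = v_i$ for the position bound. Both steps are correct: the Danskin-type fact that the upper Dini derivative of a finite maximum of differentiable functions is the maximum of the derivatives over the active set, combined with nonnegativity of $\psi$ and maximality of $i^\ast$, gives $D^+h_e(t)\le 0$, and a continuous function with nonpositive upper Dini derivative is nonincreasing; intersecting the half-spaces over all unit vectors $e$ traps $v_i(t)$ in the convex hull of $\{v_{j0}\}$, which lies in the ball of radius $\sup_j|v_{j0}|$.

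One caveat you should repair: your justification of the Dini step, namely that ``on $[0,T]$ where a solution is assumed to exist, the particles cannot collide and hence every communication weight is finite,'' is both unjustified and unnecessary. It is unjustified because non-collision for $\alpha\ge 1$ is precisely the content of Theorem \ref{thm_main1}, which is proved \emph{using} this proposition (so invoking it here is circular), and because for $\alpha\in(0,1)$ the claim is simply false: solutions can exist past collisions, with particles sticking together, as the paper recalls from \cite{jpe, jps}. It is unnecessary because all your argument needs is that $(x,v)$ is a solution in whatever sense is assumed --- e.g.\ the ODE holds pointwise on the (open, full-measure) set of non-collision times, or a.e.\ with $v$ absolutely continuous. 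Then $h_e$ is absolutely continuous, $h_e'(t)\le 0$ at a.e.\ $t$, and monotonicity of $h_e$ follows by integration; no collision-freeness is required anywhere. With that rewording your proof is complete and matches the one the paper outsources to the literature.
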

For notational simplicity, we set $M := \sup_{1 \leq i \leq N}|v_{i0}|$ and $R=R(T):= \sup_{1 \leq i \leq N}|x_{i0}|$ + TM.

%
%
%
%
In order to introduce a general strategy of the proofs, we begin with a simplified, one--dimensional case that serves as a good starting point and introduces the main ideas 
behind our argumentation in higher dimensions.
\begin{prop}\label{d1}
Let $d=1$ and $\alpha\geq 1$. Suppose that the initial data $(x_0,v_0)$ satisfy 
\[
x_{i0}\neq x_{j0} \quad \mbox{for } 1 \leq i \neq j \leq N.
\]
Then the system \eqref{cspart}-\eqref{ini_cspart} admits a unique smooth solution. Moreover, the trajectories of this solution do not collide in finite time, i.e.,
\[
x_i(t)\neq x_j(t) \quad \mbox{for } 1 \leq i \neq j \leq N, \quad t \geq 0.
\]
\end{prop}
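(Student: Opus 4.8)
The plan is to reduce the statement to a purely quantitative claim—that no two trajectories meet in finite time—and then to extract this from a scalar differential inequality for the gaps between consecutive particles. First I would record the soft part. Since $\psi$ in \eqref{psi} is smooth on $(0,\infty)$, the right-hand side of \eqref{cspart} is locally Lipschitz on the open set $\{x_i\neq x_j,\ i\neq j\}$, so Cauchy--Lipschitz yields a unique smooth solution on a maximal interval $[0,T^\ast)$, and (using the a priori bounds of Proposition \ref{bounds}, which keep positions and velocities bounded) the only way to have $T^\ast<\infty$ is that $\min_{i\neq j}|x_i-x_j|\to 0$. Thus it suffices to prove that the particles stay separated on every finite interval; the bound $\sup_i|v_i(t)|\le M$ will be used throughout to keep all velocity differences bounded by $2M$. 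In $d=1$ I would then observe that, as long as the solution exists and no collision has occurred, the ordering of the particles is preserved: each gap $d_k:=x_{k+1}-x_k$ stays positive, so a collision is exactly the event that some $d_k$ reaches $0$, and it suffices to bound the minimal gap away from $0$.

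The core mechanism is a dissipative inequality for a vanishing gap. Suppose toward a contradiction that a first collision occurs at a finite time $t^\ast$, and track $d=d_k$ for a gap tending to $0$. Differentiating twice and isolating in $\dot v_{k+1}-\dot v_k$ the two terms carrying the singular factor $\psi(d)$, one obtains
\begin{equation*}
\ddot d = -\frac{2}{N}\,\psi(d)\,\dot d + B,
\end{equation*}
where $B$ collects the remaining interactions. If the collision is \emph{binary}—no other particle approaches the pair—then $B$ is bounded on $[0,t^\ast]$ by $2M$ times a sum of $\psi$ evaluated at distances bounded below. The argument then closes cleanly: introducing an antiderivative $\Psi$ of $\psi$, which satisfies $\Psi(s)\to-\infty$ as $s\to0^+$ precisely because $\psi$ is non-integrable at the origin for $\alpha\ge 1$, one finds that $\dot d+\frac{2}{N}\Psi(d)$ has bounded derivative and hence stays bounded on $[0,t^\ast)$. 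Letting $d\to0$ then forces $\dot d\to+\infty$, contradicting $|\dot d|=|v_{k+1}-v_k|\le 2M$. This is exactly where the threshold $\alpha\ge 1$ enters, through the divergence of $\int_0\psi$, and the endpoint $\alpha=1$ is covered by the logarithmic antiderivative $\Psi(s)=\log s$.

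The hard part is that a first collision need not be binary: a whole cluster of consecutive particles may concentrate at $t^\ast$, in which case the cross terms in $B$ themselves become singular, of size comparable to $\tfrac1N\psi(d)$, and carry no definite sign (the signs of $v_{k+2}-v_{k+1}$, etc., are uncontrolled). A purely scalar comparison then breaks down, since the worst-case inequality $\ddot d\ge-\tfrac1N\psi(d)\,(2\dot d+K)$ is consistent with $d\to0$ at a bounded approach speed. To overcome this I would work with the minimal gap $d(t)=\min_k d_k(t)$ (treating its non-smoothness via one-sided derivatives) and exploit two structural facts. First, since every gap exceeds $d$, one has $|x_{k+1}-x_j|\ge|j-k-1|\,d$, so the cross interactions sum to at most $\psi(d)\sum_{m\ge1}m^{-\alpha}$; as this existence statement fixes $N$ and allows constants depending on $N$, even the harmonic loss at $\alpha=1$ is harmless. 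Second—and this is the genuinely delicate ingredient—the very strength of the alignment inside a tightening cluster damps the relative velocities, so that $\dot d\to0$ rather than staying bounded away from it; quantifying this damping through the dissipation term $-\tfrac2N\psi(d)\dot d^{\,2}$ in the evolution of $\tfrac12\dot d^{\,2}$ is what upgrades the crude bound into a genuine local dissipative inequality and rules out finite-time collision for all $\alpha\ge1$.
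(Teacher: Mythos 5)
Your soft reduction (local well-posedness plus continuation until the minimal inter-particle distance vanishes) and your treatment of a \emph{binary} collision are both correct, and the binary argument is essentially the paper's mechanism: from $\ddot d = -\frac{2}{N}\frac{d}{dt}\Psi(d) + B$ with $B$ bounded one gets that $\dot d + \frac{2}{N}\Psi(d)$ is bounded, while $\Psi(d)\to-\infty$ as $d\to 0^+$ (this is where $\alpha\ge 1$ enters), contradicting the velocity bound of Proposition \ref{bounds}. The genuine gap is the cluster case, which you rightly identify as the crux but do not resolve. Your first structural fact (bounding the cross interactions by $C\psi(d)$ via $|x_{k+1}-x_j|\ge |j-k-1|\,d$) only confirms that the forcing is of the \emph{same order} as the dissipation; your second claim --- that the damping term in $\frac{d}{dt}\frac{1}{2}\dot d^{\,2}$ forces $\dot d\to 0$ --- is asserted, not proved, and the estimates you have cannot deliver it. Indeed, with $|B|\le \frac{CM}{N}\psi(d)+C'$ the energy inequality reads $\frac{d}{dt}\frac{1}{2}\dot d^{\,2} \le \frac{1}{N}\psi(d)\,|\dot d|\,(CM-2|\dot d|)+C'|\dot d|$, which at best traps $|\dot d|$ below a constant of order $M$ --- a bound you already have from Proposition \ref{bounds} --- and a bounded, non-vanishing approach speed is perfectly consistent with $d\to 0$ in finite time. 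So for clusters of three or more particles your argument does not close, and that case is the whole content of the proposition beyond two particles.

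The idea you are missing is the paper's cluster summation, which replaces the unproven damping claim by an exact algebraic cancellation. Let $[l]$ be the set of particles colliding at the putative first collision time $t_0$, and let $i\in[l]$ be the \emph{rightmost} particle of the cluster (here is where $d=1$ is used). Instead of the minimal gap, track $r(t)=\sum_{j\in[l]}(v_i-v_j)$. In $\dot r$, the singular intra-cluster terms with indefinite sign --- precisely the ones that defeat your scalar comparison --- cancel \emph{exactly} by antisymmetry in the pair of summation indices; the interactions with particles outside $[l]$ are bounded on $[0,t_0)$ because those distances stay above some $\delta>0$; and the only surviving singular part is an exact derivative, a constant multiple of $\frac{d}{dt}\sum_{k\in[l]}\Psi(x_i-x_k)$. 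Because $i$ is rightmost, every argument $x_i-x_k$ is positive and every term $\Psi(x_i-x_k)$ diverges to $-\infty$ with the \emph{same sign} as $t\to t_0$, so no cancellation inside this sum is possible. Integrating in time and using that $r$ is bounded (Proposition \ref{bounds}) shows that $\sum_{k\in[l]}\Psi(|x_k-x_i|)$ stays bounded on $[0,t_0)$, a contradiction. This handles clusters of arbitrary size with no more work than the binary case, and it is the step your proposal needs to be complete.
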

\begin{proof}
Suppose  $\alpha\geq 1$ and that we have an arbitrary non--collision initial data $(x_0,v_0)$. Then since $\psi(|x_i-x_j|)$ is regular as long as $x_i\neq x_j$, 
then there exists a unique smooth solution to \eqref{cspart} on the time interval $[0,t_0)$, where $t_0$ is a supposed time of the first collision between any number of particles. Suppose that $l$th particle collides with some other particles at $t_0$ and denote by $[l]$ the set of all indices $i$ such that $i$th particle collides with $l$th particle at $t_0$. 
We will show that $t_0 < \infty$ actually does not exist, i.e., $t_0 = \infty$. Since $x(t)$ is Lipschitz continuous on $[0,t_0)$, it can be continuously extended to $t_0$, which means that $|x_i(t)-x_j(t)|\to 0$ as $t\to t_0$ if $i,j\in[l]$. Suppose without a loss of generality that $x_i(t)$ with $i\in[l]$ is the furthest to the right particle  
from $[l]$, i.e., we have $x_j(t) < x_i(t)$ for all $j\in[l], j\neq i$ on $[0,t_0)$. For $t\in[0,t_0)$, let
\begin{align*}
r(t):= \sum_{j\in[l]}(v_i(t)-v_j(t)).
\end{align*}
Then it follows from $(\ref{cspart})_2$ that
\begin{align}\label{d1eq1}
\begin{aligned}
\frac{d}{dt}r 
&= \frac{1}{N}\sum_{j\in[l]}\sum_{k=1}^N\psi(|x_k-x_i|)(v_k-v_i)
-\frac{1}{N}\sum_{j\in[l]}\sum_{k=1}^N\psi(|x_k-x_j|)(v_k-v_j)\\
&= \frac{1}{N}\sum_{j,k\in[l]}\psi(|x_k-x_i|)(v_k-v_i)
-\frac{1}{N}\sum_{j,k\in[l]}\psi(|x_k-x_j|)(v_k-v_j)\\
&\quad +\frac{1}{N}\sum_{\stackrel{j\in[l]}{k\notin[l]}} \left(\psi(|x_k-x_i|)(v_k-v_i)-\psi(|x_k-x_j|)(v_k-v_j)\right)\\
&=: I_1 + I_2 + I_3,
\end{aligned}
\end{align} 
where by substituting indices $j$ and $k$ from the set $[l]$, we obtain
\begin{align*}
I_2 = \frac{1}{N}\sum_{j,k\in[l]}\psi(|x_k-x_j|)(v_j-v_k) = -I_2=0
\end{align*}
and $I_3$ is integrable on $[0,t_0)$. On the other hand, since $x_i(t)>x_k(t)$ for $k\in[l]$, we find
\begin{align*}
\sum_{k\in[l]}\left(\Psi(|x_k-x_i|)\right)' = \sum_{k\in[l]}\left(\Psi(x_i-x_k)\right)' = \sum_{k\in[l]}\psi(x_i-x_k)(v_i-v_k) = I_1.
\end{align*}
Here $\Psi$ is the primitive of $\psi$, i.e.,
\bq\label{def_pri}
\Psi(s) = \left\{ \begin{array}{ll}
 \ln(s) & \textrm{if $\alpha = 1$,}\\[1mm]
\displaystyle \frac{1}{1-\alpha}s^{1-\alpha} & \textrm{if $\alpha > 1$}.
  \end{array} \right.
\eq
Integrating \eqref{d1eq1} over $[0,t]$ with $t < t_0$ it yields
\[
r(t)-r(0)  = \sum_{k\in[l]}\Psi(|x_{0,k}-x_{0,i}|)-\sum_{k\in[l]}\Psi(|x_k(t)-x_i(t)|) + \int_0^t I_3\,ds,
\]
and by the bounds from Proposition \ref{bounds}, this implies that the function
\[
t\mapsto \sum_{k\in[l]}\Psi(|x_k(t)-x_i(t)|)
\]
is bounded on the time interval $[0,t_0)$. However, this cannot be the case since $|x_k(t)-x_i(t)|\to 0$ as $t\to t_0$ for all $k\in[l]$ and $\Psi$ has a singularity at $0$. 
Thus this contradicts the supposed existence of a finite time of collision. Hence the unique smooth solution that was previously assumed to exist on the time interval $[0,t_0)$
 can be actually prolonged up to an arbitrary finite time $T$. This completes the proof.
\end{proof}

We are now in a position to provide the proof of non-collisioness for the multi-dimensional case. 

\begin{theo}\label{thm_main1} Let $d \geq 1$ and $\alpha \geq 1$. Suppose that the initial data $(x_0,v_0)$ are non-collisional, i.e. they satisfy 
\[
x_{i0}\neq x_{j0} \quad \mbox{for } 1 \leq i \neq j \leq N.
\]
Then the system \eqref{cspart}-\eqref{ini_cspart} admits a unique smooth solution. Moreover, the trajectories of this solution are also non-collisional, i.e.,
\[
x_i(t)\neq x_j(t) \quad \mbox{for } 1 \leq i \neq j \leq N, \quad t \geq 0.
\]

\end{theo}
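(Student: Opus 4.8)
The plan is to argue by contradiction exactly as in the proof of Proposition~\ref{d1}: assuming a first collision at some finite time $t_0$, let $[l]$ be the cluster of indices of the particles that coincide at $t_0$, so that $d_{ij}(t):=|x_i(t)-x_j(t)|\to 0$ as $t\to t_0$ for every $i,j\in[l]$. On $[0,t_0)$ the weights $\psi(d_{ij})$ are smooth, hence the solution is smooth there, and Proposition~\ref{bounds} provides the uniform bounds $|v_i|\le M$ and $|x_i|\le R$. The goal is to build a scalar functional which, on one hand, must stay bounded on $[0,t_0)$ because of these velocity bounds, and which, on the other hand, is forced to diverge to $-\infty$ because the primitive $\Psi$ from \eqref{def_pri} satisfies $\Psi(s)\to-\infty$ as $s\to 0^+$.

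Since in several dimensions there is no ordering singling out a ``rightmost'' particle, I would replace the first-order quantity $r$ of Proposition~\ref{d1} by second-order information on the distances. Writing $e_{ij}=(x_i-x_j)/d_{ij}$ and differentiating $d_{ij}$ twice yields the identity
\[
\ddot d_{ij}=\frac{|v_i-v_j|^2-\dot d_{ij}^{\,2}}{d_{ij}}+e_{ij}\cdot(\dot v_i-\dot v_j),
\]
whose first term is nonnegative by Cauchy--Schwarz. Isolating in $\dot v_i-\dot v_j$ the direct $i$--$j$ interaction, one checks that its contribution to $e_{ij}\cdot(\dot v_i-\dot v_j)$ is exactly $-\frac{2}{N}\psi(d_{ij})\dot d_{ij}=-\frac{2}{N}\frac{d}{dt}\Psi(d_{ij})$. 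Forming then $G(t):=\sum_{i<j\in[l]}\dot d_{ij}$, which is bounded on $[0,t_0)$ since $|\dot d_{ij}|\le|v_i-v_j|\le 2M$, I obtain an identity of the schematic form
\[
\frac{d}{dt}\Bigl[G+\tfrac{2}{N}\sum_{i<j\in[l]}\Psi(d_{ij})\Bigr]=(\text{nonnegative tangential terms})+T+O(1),
\]
where the $O(1)$ collects interactions with particles $k\notin[l]$, whose distances to the cluster stay bounded below near $t_0$ and are therefore harmless.

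The main obstacle is the term $T$ gathering the intra-cluster cross interactions, namely the contributions $\tfrac1N\,e_{ij}\cdot\psi(d_{ik})(v_k-v_i)$ carried by three distinct indices $i,j,k\in[l]$, which involve the singular factors $\psi(d_{ik})$ that also blow up at $t_0$. This is precisely where the multidimensional case departs from Proposition~\ref{d1}: in one dimension the rightmost particle keeps a fixed order inside $[l]$ on $[0,t_0)$ (no two cluster particles can cross before the first collision), so the analogous internal sum either reduces to a clean total derivative of $\sum_k\Psi$ or vanishes by the antisymmetry that killed $I_2$. Lacking this ordering device, I would control $T$ through the system of locally dissipative differential inequalities announced after Proposition~\ref{bounds}, exploiting the antisymmetry of the alignment operator together with the spare nonnegative tangential term in order to bound the right-hand side from below by a constant $-C$ uniformly on $[0,t_0)$; securing this bound is the crux of the argument, and I would first verify it in the clean case where $[l]$ is a single colliding pair, in which $T$ is absent and the estimate is immediate.

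Once the lower bound $\frac{d}{dt}\bigl[G+\tfrac{2}{N}\sum_{i<j\in[l]}\Psi(d_{ij})\bigr]\ge -C$ is established, integrating over $[0,t)$ and using the boundedness of $G$ shows that $\sum_{i<j\in[l]}\Psi(d_{ij})$ stays bounded below on $[0,t_0)$. This contradicts $\Psi(d_{ij}(t))\to-\infty$, which holds for every colliding pair, and therefore excludes a finite first-collision time. Consequently the smooth solution never leaves the non-collisional regime and extends to every finite horizon $T$, which delivers the asserted global existence, uniqueness and smoothness and completes the proof.
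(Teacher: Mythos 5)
Your overall skeleton --- contradiction at a supposed first collision time $t_0$, isolating the colliding cluster $[l]$, and deriving a contradiction from the boundedness of a quantity involving the primitive $\Psi$, which must diverge as the cluster distances vanish --- agrees with the paper. But the proposal has a genuine gap at exactly the point you yourself flag as ``the crux'': the control of the singular intra-cluster cross terms $T$. You defer this to ``the antisymmetry of the alignment operator together with the spare nonnegative tangential term,'' but neither tool works for the functional you chose. Symmetrizing the sum $\sum_{i,j,k\in[l]} e_{ij}\cdot\psi(d_{ik})(v_k-v_i)$ over $i\leftrightarrow k$ leaves terms of the form $\psi(d_{ik})\,(v_k-v_i)\cdot(e_{ij}-e_{kj})$, which are still singular (the factor $\psi(d_{ik})$ blows up at $t_0$) and carry no sign, because $e_{ij}-e_{kj}$ bears no relation to $v_k-v_i$; antisymmetry produces a definite sign only when the velocity difference is paired with itself, as happens for $\tfrac{d}{dt}\|v\|_{[l]}^2$ in the paper. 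Nor can Young's inequality push $T$ into the tangential terms: those control only $|v_i-v_k|^2-\dot d_{ik}^{\,2}$ (the component of $v_i-v_k$ orthogonal to $e_{ik}$) divided by $d_{ik}$, while $T$ contains the full velocity difference, radial part included, multiplied by $d_{ik}^{-\alpha}$; for $\alpha>1$ the singularity orders do not even match, so no absorption of the schematic form $\psi(d_{ik})|v_k-v_i|\le \varepsilon\,(|v_i-v_k|^2-\dot d_{ik}^{\,2})/d_{ik}+C_\varepsilon$ can close with a bounded remainder. Checking the case $|[l]|=2$ does not help, since the entire difficulty is the presence of a third cluster index.

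What the paper does instead, and what is missing from your proposal, is a choice of functionals compatible with the quadratic, dissipative structure of the alignment operator: $\|x\|_{[l]}^2=\sum_{i,j\in[l]}|x_i-x_j|^2$ and $\|v\|_{[l]}^2=\sum_{i,j\in[l]}|v_i-v_j|^2$. In $\tfrac{d}{dt}\|v\|_{[l]}^2$ the symmetrization turns every singular intra-cluster term into $-\tfrac{|[l]|}{N}\sum_{i,j\in[l]}\psi(|x_i-x_j|)|v_i-v_j|^2\le 0$, i.e.\ with a favorable sign, and the cross terms with $k\notin[l]$ are Lipschitz-regular by \eqref{delta}; this yields the differential inequality $\tfrac{d}{dt}\|v\|_{[l]}\le -c_0\psi(\|x\|_{[l]})\|v\|_{[l]}+c_1\|x\|_{[l]}$. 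Gronwall's lemma then equips $\|v\|_{[l]}$ with the decay factor $e^{-c_0\int\psi(\|x\|_{[l]})}$, and it is precisely this factor that cancels the singular integrand in the bound $|\Psi(\|x\|_{[l]}(t))|\le\int\psi(\|x\|_{[l]})\|v\|_{[l]}\,d\tau+|\Psi(\|x\|_{[l]}(s))|$, via the elementary estimate $\int_s^t a(\tau)\,e^{-c_0\int_s^\tau a(\sigma)d\sigma}\,d\tau\le 1/c_0$. This cancellation mechanism is the actual engine of the multidimensional proof; without it, or a genuine substitute for it, your argument does not close.
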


\begin{proof}[Proof of Theorem \ref{thm_main1}]
At the very beginning of the proof, let us reformulate the thesis in a more suitable way. Let us fix $\alpha\geq 1$ and $T>0$. We need to prove that on $[0,T]$ 
there exists a unique solution to (\ref{cspart}) and that collisions of particles are impossible. However, since at $t=0$ the particles have distinct 
positions and the communication weight $\psi$ is singular only in a neighborhood of $0$, local existence of a unique smooth solution is standard. In fact, there 
are two possibilities: the particles do not collide in $[0,T]$ and the local existence can be extended up to $[0,T]$ or there exists $t_0\in(0,T]$, the first time of
 collision of any particles and we only know that the solution exists and is unique and smooth on $[0,t_0)$. Thus let us suppose that such $t_0$ exists. Then, by its definition,
 there exists an index $l=1,...,N$ such that the $l$th particle collides with some other particles. Let us denote by $[l]$ the set of all those indices $j \in \{1,\cdots, N\}$ that 
the $j$th particle collides with $l$th particle, i.e.,
\begin{align}
&|x_l(t)-x_j(t)|\to 0\ \ \ {\rm as}\ \ \  t\to t_0\ \ \ {\rm for\ all}\ \ \ j\in[l],\nonumber\\
&|x_l(t)-x_j(t)|\geq\delta>0\ \ \ {\rm in}\ \ \ [0,t_0)\ \ \ {\rm for\ all}\ \ \ j\notin[l]\ \ \ {\rm and\ some}\
 \ \ \delta>0.\label{delta}
\end{align}
Thus by the definition of $t_0$ there exists at least one set $[l]$ such that $\lt|[l]\rt| > 1$. Let us fix one of such sets denoting it simply by $[l]$. Let us also  denote
\begin{align*}
\|x\|_{[l]}(t):= \sqrt{\sum_{i,j\in[l]}|x_i(t)-x_j(t)|^2} \quad \mbox{and} \quad \|v\|_{[l]}(t):= \sqrt{\sum_{i,j\in[l]}|v_i(t)-v_j(t)|^2},
\end{align*}
where the sum is taken over all $i,j\in[l]$. Then we get
\begin{align}\label{contr}
\|x\|_{[l]}(t)\to 0\ \ \  {\rm as}\ \ \  t\nearrow t_0.
\end{align}
In particular if we show that it is impossible for $\|x\|_{[l]}$ to converge to $0$ as $t\nearrow t_0$, then it will contradict the assumption that $t_0$ is the first time of collision of any particles. We have
\begin{align*}
\frac{d}{dt}\|x\|_{[l]}^2 = 2\sum_{i,j\in[l]}(x_i-x_j)\cdot(v_i-v_j)\leq 2\sqrt{\sum_{i,j\in[l]}|x_i-x_j|^2}\sqrt{\sum_{i,j\in[l]}|v_i-v_j|^2}= 2\|x\|_{[l]}\|v\|_{[l]},
\end{align*}
which implies that
\begin{align}\label{rprime}
\left|\frac{d}{dt}\|x\|_{[l]}\right|\leq \|v\|_{[l]}.
\end{align}
On the other hand, we find from $(\ref{cspart})_2$ that
\begin{align}\label{dv}
\frac{d}{dt}\|v\|_{[l]}^2 &= 2\sum_{i,j\in[l]}(v_i-v_j)\cdot\left[\frac{1}{N} \sum_{k=1}^N\psi(|x_k-x_i|)(v_k-v_i)- \frac{1}{N}\sum_{k=1}^N\psi(|x_k-x_j|)(v_k-v_j)\right]\\
&=\frac{2}{N}\left(\sum_{i,j,k\in[l]}+\sum_{\stackrel{i,j\in[l]}{k\notin[l]}}\right) \left[\psi(|x_i-x_k|)(v_i-v_j)\cdot(v_k-v_i)-\psi(|x_j-x_k|)(v_i-v_j)\cdot(v_k-v_j)\right]\nonumber\\
&=: J_1 + J_2.\nonumber
\end{align}
The estimate of $J_1$ follows by its antisymmetry, which we explain below.
By substituting indices $k$ and $i$ in the first term of $J_1$, we obtain
\begin{align}\label{symm}
&\frac{2}{N}\sum_{i,j,k\in[l]}\psi(|x_k-x_i|)(v_i-v_j)\cdot(v_k-v_i) \\
&\quad =\frac{1}{N}\sum_{i,j,k\in[l]}\psi(|x_k-x_i|)(v_i-v_j)\cdot(v_k-v_i) +\frac{1}{N}\sum_{i,j,k\in[l]}\psi(|x_k-x_i|)(v_k-v_j)\cdot(v_i-v_k)\nonumber\\
&\quad = \frac{1}{N}\sum_{i,j,k\in[l]}\psi(|x_k-x_i|)(v_i-v_k)\cdot(v_k-v_i)\nonumber\\
&\quad =-\frac{|[l]|}{N}\sum_{i,j\in[l]}\psi(|x_i-x_j|)|v_i-v_j|^2,\nonumber
\end{align}
and similarly
\begin{align*}
- \frac{2}{N}\sum_{i,j,k\in[l]}^N\psi(|x_k-x_j|)(v_i-v_j)\cdot(v_k-v_j) = -\frac{|[l]|}{N}\sum_{i,j\in[l]}\psi(|x_i-x_j|)|v_i-v_j|^2.
\end{align*}
This yields
\begin{align*}
J_1 \leq -\frac{2|[l]|}{N}\sum_{i,j\in[l]}\psi(|x_i-x_j|)|v_i-v_j|^2.
\end{align*}
Since we have $|x_i-x_j|\leq  \|x\|_{[l]}$ for all $i,j\in[l]$, by monotonicity of $\psi$ we obtain
\begin{align*}
J_1 \leq-2c_0\psi(\|x\|_{[l]})\sum_{i,j\in[l]}|v_i-v_j|^2 = -2c_0\psi(\|x\|_{[l]})\|v\|_{[l]}^2,
\end{align*}
where $c_0 := |[l]|/N$. The estimate of $J_2$ follows by the fact that $|x_k-x_i|$ is separated from $0$ for $i\in[l]$ and $k\notin[l]$, which is written explicitly in (\ref{delta}). We have
\begin{align*}
J_2 &= \frac{2}{N}\sum_{\stackrel{i,j\in[l]}{k\notin[l]}}\psi(|x_i-x_k|)(v_i-v_j)\cdot(v_j-v_i) + \frac{2}{N}\sum_{\stackrel{i,j\in[l]}{k\notin[l]}}\left(\psi(|x_i-x_k|)-\psi(|x_j-x_k|)\right)(v_i-v_j)\cdot(v_k-v_j) \\
&= \underbrace{-\frac{2}{N}\sum_{\stackrel{i,j\in[l]}{k\notin[l]}} \psi(|x_i-x_k|)|v_i-v_j|^2}_{\leq 0} + \frac{2}{N}\sum_{\stackrel{i,j\in[l]}{k\notin[l]}}\left(\psi(|x_i-x_k|)-\psi(|x_j-x_k|)\right)(v_i-v_j)\cdot(v_k-v_j) \\
&\leq \frac{2L(\delta)}{N}\sum_{\stackrel{i,j\in[l]}{k\notin[l]}}|(v_i-v_j)\cdot(v_k-v_j)||x_i-x_j|,
\end{align*}
where $L(\delta)$ is the Lipschitz constant of $\psi$ in the interval $(\delta,\infty)$. Moreover by Proposition \ref{bounds} velocity $|v_k-v_j|$ is bounded by $2M$ and ultimately by H\"older's inequality
\[
J_2 \leq \frac{4ML(\delta)}{N}\sum_{\stackrel{i,j\in[l]}{k\notin[l]}}|v_i - v_j||x_i - x_j| = \frac{4ML(\delta)(N - |[l]|)}{N}\sum_{i,j \in [l]}|v_i - v_j||x_i - x_j| \leq 2c_1 \|v\|_{[l]}\|x\|_{[l]},
\]
where $c_1$ is a positive constant given by
\[
c_1 := \frac{2ML(\delta)(N- |[l]|)}{N}.
\]
Combining all of the above estimates, we have
\[
\frac{d}{dt}\|v\|_{[l]}^2\leq -2c_0\psi\lt(\|x\|_{[l]})\rt)\|v\|_{[l]}^2 + 2c_1\|v\|_{[l]}\|x\|_{[l]}.
\]
Note that if $\|v\|_{[l]} \neq 0$, then we get $\frac{d}{dt}\|v\|_{[l]}^2 = 2\|v\|_{[l]}\frac{d}{dt}\|v\|_{[l]}$. On the other hand, if $\|v\|_{[l]} \equiv 0$ on an open sub interval of $(s,t_0)$, then it is clear $\frac{d}{dt}\|v\|_{[l]} \equiv 0 \leq -c_0\psi\lt(\|x\|_{[l]})\rt)\|v\|_{[l]} + c_1\|x\|_{[l]} = c_1\|x\|_{[l]}$ on that interval. Thus we obtain
\[
\frac{d}{dt}\|v\|_{[l]}\leq -c_0\psi\lt(\|x\|_{[l]})\rt)\|v\|_{[l]} + c_1\|x\|_{[l]} \quad a.e. \mbox{ on } (s,t_0).
\]
Applying Gronwall's inequality to the above differential inequality together with the continuity of $\|v\|_{[l]}$ on the time interval $(s,t_0)$ yields
\begin{align}\label{rbis}
\|v\|_{[l]}(t)\leq \left(c_1\int_s^t\|x\|_{[l]}(\tau) e^{c_0\int_s^\tau\psi(\|x\|_{[l]}(\sigma))d\sigma}d\tau +\|v\|_{[l]}(s)\right)\exp\lt(-c_0\int_s^t\psi(\|x\|_{[l]}(\tau))\,d\tau\rt).
\end{align}

The estimates \eqref{rprime} and \eqref{rbis} enable us to finally approach the conclusion of the proof. Let us recall $\Psi$ is the primitive of $\psi$ given in \eqref{def_pri}. 
Then, by (\ref{rprime}) in the interval $(s,t_0)$, we have
\begin{align*}
\left|\Psi(\|x\|_{[l]}(t))\right| &= \left|\int_s^t\frac{d}{dt}\Psi(\|x\|_{[l]}(\tau))\,d\tau
 +\Psi(\|x\|_{[l]}(s))\right|\\
&=\left|\int_s^t\psi(\|x\|_{[l]}(\tau))\left(\frac{d}{dt}\|x\|_{[l]}\right)(\tau)\,d\tau
+ \Psi(\|x\|_{[l]}(s))\right|\\
&\leq \int_s^t\psi(\|x\|_{[l]}(\tau))\|v\|_{[l]}(\tau)\,d\tau +|\Psi(\|x\|_{[l]}(s))|.
\end{align*}
We apply (\ref{rbis}) to obtain
\begin{align}
\left|\Psi(\|x\|_{[l]}(t))\right|
&\leq \!\!\int_s^t\!\!\psi(\|x\|_{[l]}(\tau))\left(c_1\int_s^\tau\|x\|_{[l]}(\sigma) e^{c_0\int_s^\sigma\psi(\|x\|_{[l]}(\rho))d\rho}d\sigma +\|v\|_{[l]}(s)\right) e^{-c_0\int_s^\tau\psi(\|x\|_{[l]}(\sigma))d\sigma}d\tau\nonumber\\
&\quad +|\Psi(\|x\|_{[l]}(s))| \cr
&=: A + |\Psi(\|x\|_{[l]}(s))|\label{a}.
\end{align}
In order to estimate $A$ in the above inequality let us simplify the notation by taking $a:=\psi(\|x\|_{[l]})$. Then we have
\begin{align*}
A &= c_1\int_s^ta(\tau)\int_s^\tau\|x\|_{[l]}(\sigma) e^{c_0\int_s^\sigma a(\rho)d\rho}d\sigma\ e^{-c_0\int_s^\tau a(\sigma)d\sigma}\,d\tau\\
&\quad + \|v\|_{[l]}(s)\int_s^ta(\tau)e^{-c_0\int_s^\tau a(\sigma)d\sigma}\,d\tau =: A_1 + A_2.
\end{align*}
By Proposition \ref{bounds} there exists a constant $c_3=c_3(T) = \max\{M,R\}$ such that
\[
c_1\|x\|_{[l]}\leq c_3\quad \mbox{and}\quad \|v\|_{[l]}\leq c_3 \quad\mbox{on}\quad [0,t_0).
\]
Therefore we have
\begin{align*}
A_1\leq c_3\int_s^t\left(\int_s^\tau e^{c_0\int_s^\sigma a(\rho)d\rho}d\sigma\right)\left(a(\tau) e^{-c_0\int_s^\tau a(\sigma)d\sigma}\right)\,d\tau
\end{align*}
and noting that
\begin{align*}
\int_s^t a(\tau)e^{-c_0\int_s^\tau a(\sigma)\,d\sigma}d\tau = -\frac{1}{c_0}e^{-c_0\int_s^t a(\tau)\,d\tau}
\end{align*}
by integration by parts we obtain
\begin{align*}
A_1 &\leq -\frac{c_3}{c_0}\int_s^te^{c_0\int_s^\tau a(\sigma)d\sigma}d\tau \ e^{-c_0\int_s^ta(\tau)d\tau} + \frac{c_3}{c_0}\int_s^t e^{c_0\int_s^\tau a(\sigma)d\sigma} e^{-c_0\int_s^\tau a(\sigma)\,d\sigma}\\
&\leq \frac{c_3}{c_0}T.
\end{align*}
Similar calculation reveals that
\begin{align*}
A_2\leq \frac{c_3}{c_0}\left(1-e^{-c_0\int_s^t a(\tau)d\tau}\right)\leq \frac{c_3}{c_0}
\end{align*}
and altogether
\begin{align*}
A\leq \frac{c_3}{c_0}(T+1).
\end{align*}
We apply the above estimate of $A$ in \eqref{a} to obtain
\begin{align*}
\left|\Psi(\|x\|_{[l]}(t))\right| \leq \frac{c_3}{c_0}(T+1) +|\Psi(\|x\|_{[l]}(s))|
\end{align*}
and by Proposition \ref{bounds} the right-hand side of the above inequality is bounded regardless of the choice of $s\in[0,t_0)$. Thus $|\Psi(\|x\|_{[l]})|$ is also bounded in $(s,t_0)$,
 which subsequently implies that $\|x\|_{[l]}$ is separated from $0$ in $(s,t_0)$. To be more precise, we have
\begin{align*}
\|x\|_{[l]}(t)\geq \left|\Psi^{-1}\left(\frac{c_3}{c_0}(T+1) + |\Psi(\|x\|_{[l]}(s))|\right)\right|>0.
\end{align*}
This contradicts \eqref{contr}, and consequently it proves that the assumption of existence of $t_0$, the first time of collision between any number of the particles was false.
\end{proof}

\begin{rem} The estimates of derivatives of $\|x\|_{[l]}$ and $\|v\|_{[l]}$, in principle, are similar to (and to some degree were inspired by) the SDDI derived in \cite{haliu}. In fact, we 
divided the particles into two groups: one of those particles that collide with each other at $t_0$ and the second group of those that do not collide with the first group. Next, for the first group 
we basically derived the SDDI similarly to \cite{haliu} with some additional terms that originated from the interaction between the groups.
\end{rem}

\begin{rem}As a direct application of \cite[Theorem 3.1]{ahn1}, we also have the flocking estimates: Consider the CS particle system \eqref{cspart}-\eqref{ini_cspart}. Suppose that the initial
 configurations $(x_0,v_0)$ satisfy
\[
x_{i0}\neq x_{j0} \quad \mbox{for } 1 \leq i \neq j \leq N \quad \mbox{and} \quad \|v_0 - v_c(0)\|_{\ell^\infty} < \frac12 \int_{2\|x_0 - x_c(0)\|_{\ell^\infty}}^\infty \psi(s)\,ds,
\]
where $x_c(t)$ and $v_c(t)$ denote the average quantities of the position and velocity, i.e.,
\[
x_c(t) = \frac1N\sum_{i=1}^N x_i(t) \quad \mbox{and} \quad v_c(t) = \frac1N\sum_{i=1}^N v_i(t),
\]
respectively. Then there exist positive constants $c_1 > c_0(t) > 0$ such that
\[
c_0(t)\leq |x_i(t) - x_j(t)| \leq c_1 \,, i\neq j\quad \mbox{and} \quad \|v(t) - v_c(0)\|_{\ell^\infty} \leq \|v_0 - v_c(0)\|_{\ell^\infty} e^{-\psi(2c_1) t},
\]
for $t \geq 0.$ Note that $c_0(t)$ might go to zero as $t\to\infty$ but $c_1$ is uniform in time. This implies that the communication rate is bounded below uniformly in time giving the exponential convergence to the mean velocity, see \cite{ahn1,car} for details. In particular, for the critical case, $\alpha = 1$, if we assume 
\[
x_{i0}\neq x_{j0} \quad \mbox{for } 1 \leq i \neq j \leq N,
\]
then there is no collision for all time and we have exponential flocking estimate because $\int^\infty \psi(s)\,ds = \infty$.
\end{rem}

%
%
%
%
\section{Uniform estimate of minimal interparticle distance}\label{estim}

In this part, we consider system \eqref{dcspart} denoting the singular communication weights $\psi_\delta(s) = \psi(s-\delta)$, where $\delta \geq 0$ is a control parameter to 
make particles stay away from each other. We subject system \eqref{dcspart} to the initial data
\bq\label{dini_cspart}
(x_i,v_i)(0) =: (x_{i0}, v_{i0}), \quad i =1,\cdots, N.
\eq
It is worth mentioning that the strategy used in Section \ref{noncoll} cannot be used for the system \eqref{dcspart}. Thus we propose a new argument based on an energy estimate. 
For $\delta=0$, Theorem \ref{thm_d} implies the lack of collisions between particles and on top of that, it provides a uniform with respect to $N$ estimate of non-collisioness for system \eqref{cspart}.

For a fixed $\delta>0$ we introduce a function $\ml^\beta(t)$ which determines the $\delta$-distance between particles:
\[
\ml^\beta(t) := \frac{1}{N^2}\sum_{i,j=1}^N\left(|x_i(t) - x_j(t)|-\delta\right)^{-\beta} \quad \mbox{with} \quad \beta > 0.
\]
Note that there exists a $\beta > 0$ such that $\ml^\beta(t) < \infty$ for $t \in [0,T]$ if and only if the distances between particles are no less than $\delta$ for $t \in [0,T]$. Thus the 
solution to the $\delta$-CS system \eqref{dcspart} is well-defined provided that we start from admissible initial data i.e., $|x_{i0}-x_{j0}|>\delta$ for $i\neq j$. Notice that the definition of $\ml^\beta$ and its property of  determining the $\delta$-distance between particles is valid for all $\beta>0$. However in the proof of Theorem \ref{thm_d} we take $\beta = \alpha -2$. 

We introduce the maximal life-span $T(x_0)$ of the initial datum $x_0$:
\[
T(x_0):= \sup\lt\{ s \in \R_+: \exists \mbox{ solution $(x(t),v(t))$ for the system \eqref{dcspart} in a time-interval $[0,s)$}  \rt\}.
\]
\begin{theo}\label{thm_d}Suppose that $\alpha \in [2,\infty)$ and the initial data $x_0$ satisfy
\begin{equation}\label{assume_basicd}
|x_{i0} - x_{j0}|>\delta \quad \mbox{for any} \quad 1 \leq i \neq j \leq N.
\end{equation}
Then there exists a global smooth solution $(x(t),v(t))$ to the system \eqref{dcspart}, i.e., $T(x_0) = \infty$. Moreover for $t\geq 0$ we have
\begin{align}
&\lt|\frac{1}{N^2}\sum_{i,j = 1}^N\log \lt(|x_i(t) - x_j(t)|-\delta\rt)\rt| \cr
&\qquad \quad \leq \lt|\frac{1}{N^2}\sum_{i,j = 1}^N\log \lt(|x_i(0) - x_j(0)|-\delta\rt)\rt| + \frac{T_0}{2} + \frac{1}{2N}\sum_{i=1}^N |v_i(0)|^2 \quad \mbox{if} \quad \alpha = 2.\label{estim1d}\\
&\ml^{\alpha - 2}(t) \leq \ml^{\alpha - 2}(0)e^{Ct} + Ce^{Ct}\frac{1}{N}\sum_{i=1}^N |v_i(0)|^2 \hspace{4.3cm} \mbox{if} \quad \alpha>2,\label{estim2d}
\end{align}
where $C$ is a constant depending only on $\alpha$.
\end{theo}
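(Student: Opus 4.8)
The plan is to discard the first-time-of-collision/SDDI scheme of Section~\ref{noncoll} and instead couple the distance functional $\ml^{\alpha-2}$ directly to the kinetic energy through the natural dissipation of the system, which is what makes the bound independent of $N$. Write $d_{ij}:=|x_i-x_j|-\delta$ and $K(t):=\frac{1}{2N}\sum_{i=1}^N|v_i|^2$. First I would establish the energy identity: symmetrizing the indices $i\leftrightarrow j$ in $(\ref{dcspart})_2$ gives
\[
\frac{d}{dt}K=\frac{1}{N^2}\sum_{i,j=1}^N\psi(d_{ij})\,v_i\cdot(v_j-v_i)=-\frac{1}{2N^2}\sum_{i,j=1}^N\psi(d_{ij})|v_i-v_j|^2=:-D(t)\le 0,
\]
so $D=-\dot K\ge 0$ is the dissipation rate and $\tfrac1N\sum_i|v_i|^2$ is nonincreasing. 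The same maximum-velocity argument as in Proposition~\ref{bounds} (valid since $\psi_\delta\ge 0$ on the existence interval) also gives $\max_i|v_i|\le M$, hence $d_{ij}\le 2R$ on any finite interval, which I will need for the continuation step.

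The heart of the proof is to differentiate $\ml^{\beta}$ with $\beta:=\alpha-2$ and recognize that the outcome is exactly controlled by $\sqrt D$. With $n_{ij}:=(x_i-x_j)/|x_i-x_j|$ one has $\tfrac{d}{dt}d_{ij}^{-\beta}=-\beta d_{ij}^{-\beta-1}\,n_{ij}\cdot(v_i-v_j)$, so
\[
\Big|\frac{d}{dt}\ml^\beta\Big|\le\frac{\beta}{N^2}\sum_{i,j=1}^N d_{ij}^{-(\alpha-1)}|v_i-v_j|.
\]
The key algebraic observation is the splitting $\alpha-1=\tfrac{\alpha}{2}+\tfrac{\beta}{2}$, which lets me apply Cauchy--Schwarz in the pair index $(i,j)$:
\[
\Big|\frac{d}{dt}\ml^\beta\Big|\le\frac{\beta}{N^2}\Big(\sum_{i,j}d_{ij}^{-\alpha}|v_i-v_j|^2\Big)^{1/2}\Big(\sum_{i,j}d_{ij}^{-\beta}\Big)^{1/2}=\beta\sqrt{2D}\,\sqrt{\ml^\beta},
\]
where the $N^{-2}$ normalizations of $D$ and $\ml^\beta$ absorb all combinatorial factors. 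Young's inequality then gives $\frac{d}{dt}\ml^\beta\le D+\frac{\beta^2}{2}\ml^\beta=-\dot K+\frac{\beta^2}{2}\ml^\beta$, so that $\frac{d}{dt}(\ml^\beta+K)\le\frac{\beta^2}{2}(\ml^\beta+K)$ using $K\ge0$. Grönwall applied to $\ml^\beta+K$, followed by discarding the nonnegative $K(t)$, yields \eqref{estim2d} with $C=\tfrac{(\alpha-2)^2}{2}$.

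For the borderline case $\alpha=2$ the power $\beta=0$ degenerates, so I would replace $\ml^\beta$ by $\frac1{N^2}\sum_{i,j}\log d_{ij}$. The identical computation, with the trivial factor $\sum_{i,j}1=N^2$ in place of $\sum_{i,j}d_{ij}^{-\beta}$, gives $\big|\tfrac{d}{dt}\tfrac1{N^2}\sum_{i,j}\log d_{ij}\big|\le\sqrt{2D}\le D+\tfrac12=-\dot K+\tfrac12$; integrating in time and using $|f|'\le|f'|$ produces \eqref{estim1d}, the linear-in-time term coming from the constant $\tfrac12$. Global existence then follows from these a priori bounds by the standard continuation argument: on the maximal interval $[0,T(x_0))$ the right-hand sides of \eqref{estim1d}--\eqref{estim2d} remain finite on every compact subinterval, and since $\log d_{ij}$ is bounded above (distances are $\le 2R$) while $d_{ij}^{-\beta}\to\infty$ as $d_{ij}\to 0$, a finite bound on these functionals forces $\min_{i\ne j}d_{ij}$ to stay strictly positive. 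No collision with the singular set can occur in finite time, so the local smooth solution extends and $T(x_0)=\infty$.

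The step I expect to be the main obstacle is not any single estimate but the structural insight that the object to control is $\ml^{\alpha-2}+K$: the exponent $\beta=\alpha-2$ is \emph{forced} by the requirement that differentiating the distance functional produce precisely the dissipation integrand $\sum_{i,j}d_{ij}^{-\alpha}|v_i-v_j|^2$ after Cauchy--Schwarz, so that the bad cross term is absorbed by $-\dot K$ rather than merely bounded through a (non-uniform) velocity sup-norm. Checking that every constant and combinatorial factor is genuinely $N$-independent, and treating the degeneration at $\alpha=2$ via the logarithm, are the remaining points requiring care.
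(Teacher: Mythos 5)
Your proposal is correct and takes essentially the same approach as the paper: the energy dissipation identity, differentiation of $\ml^{\alpha-2}$ (resp.\ the logarithmic functional when $\alpha=2$), a Young-type absorption with the exponent $\beta=\alpha-2$ matched exactly to the dissipation integrand, and Gronwall. The only cosmetic differences are that you apply Cauchy--Schwarz over the pair index before Young (the paper applies Young's inequality termwise inside the sum) and that you run Gronwall on the combined quantity $\ml^{\beta}+K$ rather than inserting the time-integrated energy bound afterwards; both reorganizations yield the same estimates with $N$-independent constants.
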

\begin{proof}
Throughout the proof we will denote for simplicity $T_0:=T(x_0)$. 
Clearly, if the distances between particles are bigger than $\delta$ for any finite time then the solution can be prolonged indefinitely and $T_0=\infty$. Moreover if \eqref{estim1d} 
or \eqref{estim2d} holds then $\inf_{t\geq 0}\min_{i,j=1,...,N}|x_i(t)-x_j(t)|>\delta$ and thus also $T_0=\infty$. Therefore in order to finish the proof it suffices to show \eqref{estim1d} and \eqref{estim2d}.

First we provide the energy dissipation of the system \eqref{dcspart}. By $\eqref{dcspart}_2$ and a symmetry argument, we have 
\[
\frac{d}{dt} \frac{1}{N}\sum_{i=1}^N |v_i(t)|^2 + \frac{1}{N^2}\sum_{i,j=1}^N\frac{|v_i(t) - v_j(t)|^2}{(|x_i(t) - x_j(t)|-\delta)^\alpha} = 0 \quad \mbox{for} \quad t \in [0,T_0),
\]
and this yields
\begin{equation}\label{est_energy}
\frac{1}{N}\sum_{i=1}^N |v_i(t)|^2 + \int_0^t \frac{1}{N^2}\sum_{i,j=1}^N\frac{|v_i(s) - v_j(s)|^2}{(|x_i(s) - x_j(s)|-\delta)^\alpha} ds =  \frac{1}{N}\sum_{i=1}^N|v_i(0)|^2 \quad \mbox{for} \quad t \in [0,T_0).
\end{equation}
We divide the proof into two cases: $\alpha = 2$ and $\alpha > 2$.  (i) $\alpha = 2$: We estimate
$$\begin{aligned}
\lt|\frac{d}{dt} \frac{1}{N^2}\sum_{i,j = 1}^N\log \lt(|x_i(t) - x_j(t)|-\delta\rt)\rt| &= \lt|\frac{1}{N^2}\sum_{i,j=1}^N \frac{(x_i(t) - x_j(t))\cdot (v_i(t) - v_j(t))}{|x_i(t) - x_j(t)|}\frac{1}{(|x_i(t)-x_j(t)|-\delta)}\rt|\cr
&\leq \frac{1}{N^2}\sum_{i,j=1}^N \frac{|v_i(t) - v_j(t)|}{|x_i(t) - x_j(t)|-\delta}\cr
&\leq \frac{1}{2} + \frac{1}{2N^2}\sum_{i,j=1}^N \frac{|v_i(t) - v_j(t)|^2}{(|x_i(t) - x_j(t)|-\delta)^2},
\end{aligned}$$
for $t \in [0,T_0)$. We use the energy estimate \eqref{est_energy} to have
$$\begin{aligned}
&\lt|\frac{1}{N^2}\sum_{i,j = 1}^N\log \lt(|x_i(t) - x_j(t)|-\delta\rt)\rt| \cr
&\quad \leq \lt|\frac{1}{N^2}\sum_{i,j = 1}^N\log \lt(|x_i(0) - x_j(0)|-\delta\rt)\rt| + \frac{t}{2} + \frac{1}{2}\int_0^t \frac{1}{N^2}\sum_{i,j=1}^N\frac{|v_i(s) - v_j(s)|^2}{(|x_i(s) - x_j(s)|-\delta)^2} ds\cr
&\quad \leq \lt|\frac{1}{N^2}\sum_{i,j = 1}^N\log \lt(|x_i(0) - x_j(0)|-\delta\rt)\rt| + \frac{T_0}{2} + \frac{1}{2N}\sum_{i=1}^N |v_i(0)|^2 \quad \mbox{for} \quad t \in [0,T_0).
\end{aligned}$$
This yields \eqref{estim1d} in case of $\alpha=2$.

(ii) $\alpha > 2$: It is a straightforward to get
$$\begin{aligned}
\frac{d \ml^\beta(t)}{dt} &= -\beta\frac{1}{N^2}\sum_{i,j=1}^N (|x_i(t) - x_j(t)|-\delta)^{-\beta - 1}\frac{(x_i(t) - x_j(t))\cdot (v_i(t) - v_j(t))}{|x_i(t)-x_j(t)|}\cr
&\leq C\frac{1}{N^2}\sum_{i,j = 1}^N (|x_i(t) - x_j(t)|-\delta)^{-\beta - 1}|v_i(t) - v_j(t)|\cr
&\leq C\frac{1}{N^2}\sum_{i,j = 1}^N\frac{1}{(|x_i(t) - x_j(t)|-\delta)^\beta} + C\frac{1}{N^2}\sum_{i,j = 1}^N\frac{|v_i(t) - v_j(t)|^2}{(|x_i(t) - x_j(t)|-\delta)^{\beta +2}}\cr
&= C\ml^\beta(t) + C\frac{1}{N^2}\sum_{i,j = 1}^N\frac{|v_i(t) - v_j(t)|^2}{(|x_i(t) - x_j(t)|-\delta)^{\beta +2}} \quad \mbox{for} \quad t \in [0,T_0),
\end{aligned}$$
where we used the Young's inequality. Then, by using Gronwall's inequality, we obtain
\[
\ml^\beta(t) \leq \ml^\beta(0)e^{Ct} + Ce^{Ct}\int_0^{t} \frac{1}{N^2}\sum_{i,j = 1}^N\frac{|v_i(s) - v_j(s)|^2}{(|x_i(s) - x_j(s)|-\delta)^{\beta +2}} ds \quad \mbox{for} \quad t \in [0,T_0).
\]
We choose $\alpha = \beta + 2$ together with \eqref{est_energy} to deduce
\[
\ml^{\alpha - 2}(t) \leq \ml^{\alpha - 2}(0)e^{Ct} + Ce^{Ct}\frac{1}{N}\sum_{i=1}^N |v_i(0)|^2 \quad \mbox{for} \quad t \in [0,T_0)
\]
and the proof of \eqref{estim2d} is finished.
Hence if $\alpha \geq 2$, then there is no collision between particles  for all time and $T_0=\infty$.
\end{proof}
\begin{rem}Similarly as before, we also have the flocking estimates for the system \eqref{dcspart}. Suppose that the initial configurations $(x_0,v_0)$ satisfy
\[
\min_{1 \leq i \neq j \leq N}|x_{i0} - x_{j0}| > \delta,
 \quad \|x_0 - x_c(0)\|_{\ell^\infty} > \delta, \quad \mbox{and} \quad \|v_0 - v_c(0)\|_{\ell^\infty} < \frac12 \int_{2\|x_0 - x_c(0)\|_{\ell^\infty}}^\infty \psi(s - \delta)\,ds,
\]
Then if $\alpha \geq 2$ there exist positive constants $c_1 > c_0 > \delta > 0$ such that
\[
\|x(t) - x_c(0)\|_{\ell^\infty} \in [c_0,c_1] \quad \mbox{and} \quad \|v(t) - v_c(0)\|_{\ell^\infty} \leq \|v_0 - v_c(0)\|_{\ell^\infty} e^{-\psi(2c_1) t},
\]
for $t \geq 0.$ 
\end{rem}

%
%
%
%

\section*{Acknowledgments}
\small{JAC was partially supported by the Royal Society via a Wolfson Research Merit Award. YPC was supported by the ERC-Starting grant HDSPCONTR ``High-Dimensional Sparse Optimal Control''.  JAC and YPC were partially supported by EPSRC grant EP/K008404/1. YPC is also supported by the Alexander Humboldt Foundation through the Humboldt Research Fellowship for Postdoctoral Researchers. JP was supported by the Polish NCN grant PRELUDIUM 2013/09/N/ST1/04113}

%
%
%
%
\bibliographystyle{abbrv}
\bibliography{ccmp}{}

\end{document}